\documentclass[reqno,11pt]{amsart}
\usepackage{bm,url,amsmath,amssymb,amsthm}

\usepackage{graphicx}
\usepackage{epic}
\usepackage{eepic}

\usepackage{enumerate}
\usepackage[top=1in, bottom=1in, left=1in, right=1in]{geometry}
\usepackage{setspace}
\onehalfspacing

\usepackage{mathrsfs}

\allowdisplaybreaks

\newcommand{\R}{\mathbb{R}}

\newcommand{\N}{\mathbb{N}}
\newcommand{\rar}{\rightarrow}
\newcommand{\bsl}{\backslash}

\newtheorem{theorem}{Theorem}

\newtheorem{lemma}{Lemma}

\theoremstyle{definition}
\newtheorem{remark}{Remark}

\title[Preimages of iterates of the Farey map]{An effective estimate for the Lebesgue measure of preimages of iterates of the Farey map}
\author{Byron Heersink}


\address{Department of Mathematics, University of Illinois at Urbana-Champaign, Urbana, IL 61801}

\email{heersin2@illinois.edu}

\begin{document}

\begin{abstract}
Using techniques from infinite ergodic theory, Kesseb\"ohmer and Stratmann determined the asymptotic behavior of the Lebesgue measure of sets of the form $F^{-n}[\alpha,\beta]$, where $[\alpha,\beta]\subseteq(0,1]$ and $F$ is the Farey map. In this paper, we provide an effective version of this result, employing mostly basic properties of the transfer operator of the Farey map and an application of Freud's effective version of Karamata's Tauberian theorem.

\ 

\noindent\textit{Keywords:} Farey map; continued fractions; Stern-Brocot sequence; Karamata's Tauberian theorem.
\end{abstract}

\maketitle

\section{Introduction}

In a letter to Laplace in 1812, Gauss posed the problem of estimating the error 
\[\lambda([a_1,a_2,\ldots]:[a_{n+1},a_{n+2},\ldots]<u)-\frac{\log(u+1)}{\log2}\]
as $n$ approaches $\infty$, where $\lambda$ is the Lebesgue measure, $u\in(0,1)$ is fixed, and
\[[a_1,a_2,\ldots]:=\cfrac{1}{a_1+\cfrac{1}{a_2+\cdots}}\qquad(a_i\in\N)\]
denotes a regular continued fraction expansion. Let $G:[0,1]\rar[0,1]$ be the Gauss map defined by
\[G(x):=\begin{cases}\{1/x\}&\mbox{if }x\neq0\\ 0&\mbox{if }x=0\end{cases}\]
and $d\nu=d\lambda/((1+x)\log2)$ be the Gauss invariant measure. This problem is equivalent to estimating
\begin{equation}\label{e01}
\lambda(G^{-n}[0,u))-\nu[0,u).\qquad(n\rar\infty)
\end{equation}

Throughout this paper, we write $f(x)=O(g(x))$, or equivalently $f(x)\ll g(x)$, as $x\rar\infty$ if there exist constants $M,N>0$ such that $|f(x)|\leq M|g(x)|$ for all $x\geq N$ (when $f(x)=O_{a_1,\ldots,a_m}(g(x))$, the constants $M$ and $N$ depend on $a_1,\ldots,a_m$); and $f(x)\sim g(x)$ as $x\rar\infty$ if $\lim_{x\rar\infty}f(x)/g(x)=1$.

L\'evy first showed that \eqref{e01} is $O(q^n)$ for $q=3.5-2\sqrt{2}$, and Wirsing determined the optimal value of $q$ as $0.30366\ldots$ by discovering the spectral gap in the transfer operator of the Gauss map. An exact solution to Gauss's problem was first given by Babenko, who proved that the transfer operator is compact when restricted to a certain Hilbert space of functions. This result was later extended by Mayer and Roepstorff. (See \cite{IK} for history and details.)

This paper is concerned with the analogue of Gauss's problem for the Farey map $F:[0,1]\rar[0,1]$ defined by
\[F(x):=\begin{cases}x/(1-x)&\mbox{if }0\leq x\leq\frac{1}{2}\\
(1-x)/x&\mbox{if }\frac{1}{2}<x\leq1.
\end{cases}\]
Specifically, we analyze the asymptotic behavior of $\lambda(F^{-n}[u,1])$. The Gauss map is conjugate to the induced transformation of the Farey map on $[1/2,1]$ (see \cite[Sections 3 and 8]{I1}). In spite of this relationship, the Gauss and Farey maps exhibit very different behavior, one of the reasons being that $F$ preserves the infinite measure $d\mu=d\lambda/x$.

In the special case $u=1/2$, $F^{-(n-1)}[u,1]$ is the $n$th sum-level set for continued fractions
\[\mathscr{C}_n:=\{[a_1,a_2,\ldots]\in[0,1]:\sum_{i=1}^ka_i=n\mbox{ for some }k\in\N\}.\]
This follows from the fact that $F$ maps continued fractions $[a_1,a_2,\ldots]$ as follows:
\[F([a_1,a_2,\ldots])=\begin{cases}
[a_1-1,a_2,\ldots]&\mbox{if }a_1\geq2\\
[a_2,a_3,\ldots]&\mbox{if }a_1=1,
\end{cases}\]
from which it is straighforward to see that $F^{-1}(\mathscr{C}_n)=\mathscr{C}_{n+1}$, and hence $\mathscr{C}_n=F^{-(n-1)}(\mathscr{C}_1)=F^{-(n-1)}[1/2,1]$ (see \cite[Lemma 2.1]{KSt2}).

\begin{figure}
\centering
\includegraphics[width=0.5\textwidth]{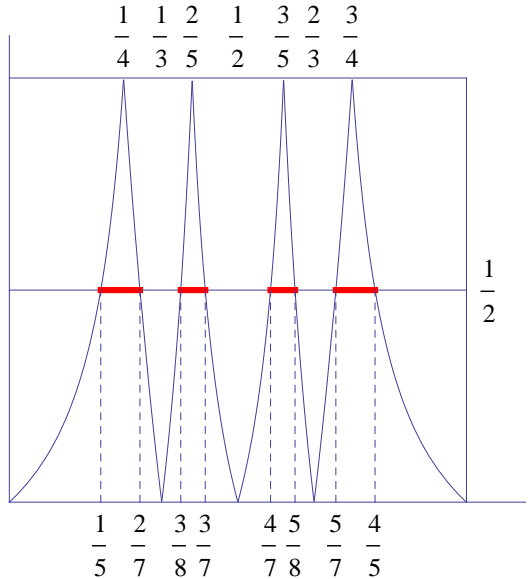}
\caption{The graph of $F^3$ and $\mathscr{C}_4$ shown as the inverse image $F^{-3}[1/2,1]$.}
\label{fig:1}
\end{figure}

Confirming a conjecture of Fiala and Kleban \cite{FK}, Kesseb\"ohmer and Stratmann proved the asymptotic equivalence \cite[Theorem 1.3]{KSt2}
\[\lambda(\mathscr{C}_n)\sim\frac{1}{\log_2n},\qquad(n\rar\infty)\]
and later, they generalized this result by proving that
\begin{equation}\label{e02}
\lambda(F^{-n}[u,1])\sim\frac{\log(1/u)}{\log n}\qquad(n\rar\infty)
\end{equation}
for all $u\in(0,1]$. This in fact follows from their stronger result \cite[Proposition 1.3]{KSt1} that the sets of the form $F^{-n}[\alpha,\beta]$, with $[\alpha,\beta]\subseteq(0,1]$, equidistribute in $[0,1]$. Their proofs applied deep results in infinite ergodic theory following from the work of Aaronson \cite{A1, A2}, and Kesseb\"ohmer and Slassi \cite{KSl1, KSl2}, to the Farey map. In particular, they used the inverse relationship between the wandering rate of a uniformly returning set of the Farey map and the decay of the iterates of the Farey map's transfer operator. The goal of this paper is to prove the following result, which provides an effective version of \eqref{e02}.

\begin{theorem}\label{T1}
For any interval $[\alpha,\beta]\subseteq(0,1]$, we have
\begin{equation}\label{e03}
\lambda(F^{-(n-1)}[\alpha,\beta])=\frac{\log(\beta/\alpha)}{\log n}\left(1+O_{\alpha,\beta}\left(\frac{1}{\log n}\right)\right).\qquad(n\rar\infty)
\end{equation}
\end{theorem}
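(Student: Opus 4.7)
The plan is to translate the theorem into a statement about iterates of the transfer operator of $F$ and then extract an effective asymptotic via a generating-function plus Tauberian argument. Let $\hat F$ denote the Perron--Frobenius operator of $F$ with respect to Lebesgue measure; explicitly,
$$\hat F f(x)=\frac{1}{(1+x)^2}\left[f\!\left(\frac{x}{1+x}\right)+f\!\left(\frac{1}{1+x}\right)\right].$$
A direct check gives $\hat F(1/x)=1/x$, reflecting the $F$-invariance of $d\mu=d\lambda/x$. By duality,
$$\lambda\bigl(F^{-(n-1)}[\alpha,\beta]\bigr)=\int_\alpha^\beta\bigl(\hat F^{n-1}\mathbf{1}\bigr)(x)\,dx,$$
so it suffices to obtain an effective pointwise asymptotic for $\hat F^{n-1}\mathbf{1}(x)$ with enough uniformity in $x$ on compact subsets of $(0,1]$ to permit integration.

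Next I would pass to the generating function
$$G(z,x)=\sum_{n=0}^\infty z^n\bigl(\hat F^n\mathbf{1}\bigr)(x),\qquad 0\le z<1,$$
which satisfies $(I-z\hat F)G(z,\cdot)=\mathbf{1}$. Iterating this relation expresses $G(z,x)$ as a weighted sum over finite inverse-branch paths of $F$, and these at depth $n$ are indexed by the $(n+1)$st level of the Stern--Brocot tree. Using only the piecewise-M\"obius form of the inverse branches and elementary derivative estimates, I would separate the contribution of the $1/x$ eigenfunction from the slow logarithmic correction coming from orbits that linger near the indifferent fixed point $0$, so as to establish the effective expansion
$$G(z,x)=\frac{1}{x(1-z)\log\tfrac{1}{1-z}}\left(1+O_x\!\left(\frac{1}{\log\tfrac{1}{1-z}}\right)\right)\qquad(z\to 1^-),$$
with implicit constants locally bounded on $(0,1]$.

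With this in hand, I would integrate over $[\alpha,\beta]$ to obtain an effective asymptotic for the generating function $A(z)=\sum_n a_nz^n$ of the non-negative sequence $a_n=\lambda(F^{-n}[\alpha,\beta])$, and then apply Freud's effective version of Karamata's Tauberian theorem. This directly yields an effective asymptotic for the partial sums $\sum_{k\le n}a_k$; to descend to the individual term demanded by \eqref{e03}, I would supplement with a quasi-monotonicity/slow-variation estimate of the form $a_{n+1}=a_n\bigl(1+O(1/(n\log n))\bigr)$, established directly from the identity $a_{n+1}=\int_{F^{-1}[\alpha,\beta]}\hat F^n\mathbf{1}\,d\lambda$ and the $1/x$-asymptotic of $\hat F^n\mathbf{1}$.

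The main obstacle will be the second step: extracting a genuine $O(1/\log(1/(1-z)))$ relative error in $G(z,x)$--rather than a merely $o(1)$ error--requires quantitative control on the contribution of orbits that remain near $0$ for many iterations, uniformly for $x\in[\alpha,\beta]$. Conceptually, this approach replaces the deep infinite-ergodic-theoretic input of Aaronson and of Kessebohmer--Stratmann with an explicit resolvent computation made manageable by the Stern--Brocot combinatorics, at the cost of heavier but elementary bookkeeping.
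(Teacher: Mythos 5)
Your framework is sound and in fact equivalent to the paper's after conjugating by $x$ (the paper works with $\hat F$ relative to $d\mu=d\lambda/x$ and the function $\varphi_0(x)=x$, so its $\hat F^n\varphi_0(x)$ is $x$ times your $\hat F^n\mathbf 1(x)$), but the proposal leaves the central difficulty unresolved. The claimed expansion $G(z,x)=\tfrac{1}{x(1-z)\log\frac{1}{1-z}}\bigl(1+O_x(1/\log\frac{1}{1-z})\bigr)$ \emph{is} the theorem in resolvent form; asserting that it follows from ``the piecewise-M\"obius form of the inverse branches and elementary derivative estimates'' together with ``separating the $1/x$ eigenfunction'' names no mechanism, and expanding $(I-z\hat F)^{-1}\mathbf 1$ over Stern--Brocot branches does not by itself produce even the leading term. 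The paper gets the analogous statement by combining two specific inputs you would need substitutes for: (i) $\hat F$ preserves increasing concave functions, which gives a \emph{uniform additive} $O_N(1)$ bound between the resolvent sum and its $\mu$-average over $[1/N,1]$ (Lemmas \ref{L1}--\ref{L2}); and (ii) Aaronson's renewal-type identity \eqref{e08}, which converts that average into two explicit series whose asymptotics are computed in Lemmas \ref{L3}--\ref{L4}. Some such identity relating the resolvent to first-return quantities seems indispensable; you have acknowledged this as ``the main obstacle,'' which is to say the proof is not there.

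Two further gaps would remain even if the resolvent estimate were granted. First, a direct application of Freud's theorem to a Laplace transform of the shape $\sigma/(\log\sigma+C)+O(1)$ yields partial sums with relative error only $O(1/\log\log n)$, not $O(1/\log n)$; the paper points this out explicitly and has to first convolve the sequence with the weights $\ell_N$ (so that the transform becomes $\sigma(1+O(\log\sigma/\sigma))$, to which Freud applies with error $O(1/\log n)$) and then deconvolve via the estimate \eqref{e11}. Your ``directly yields'' skips this and would prove a weaker theorem. Second, your descent from Ces\`aro sums to the individual term is circular: for a general interval $[\alpha,\beta]$ the sequence $a_n=\lambda(F^{-n}[\alpha,\beta])$ is not known to be monotone (the paper proves monotonicity only for intervals $[u,1]$, as a difference of two such sequences need not be monotone), and your proposed substitute $a_{n+1}=a_n\bigl(1+O(1/(n\log n))\bigr)$ is to be ``established from the $1/x$-asymptotic of $\hat F^n\mathbf 1$''---but in the effective form needed that asymptotic is precisely the statement being proved, while in the non-effective form it gives no such ratio bound. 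The clean fix is the paper's: prove $a_n$ decreasing for $[u,1]$ using the concavity-preservation of $\hat F$, sandwich $a_n$ between the averages of $a_k$ over $[0,n]$ and over $[n+1,2n]$, and obtain general $[\alpha,\beta]$ by subtracting the results for $u=\beta$ and $u=\alpha$.
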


Instead of proving Theorem \ref{T1} directly, we prove the following result, part \eqref{e05} of which resembles \cite[Theorem 1.2]{KSt2}.

\begin{theorem}\label{T2}
Let $u\in(0,1)$ and $\mathscr{C}_n^u:=F^{-(n-1)}[u,1]$. Then:
\begin{enumerate}[(a)]
\item $(\lambda(\mathscr{C}_n^u))_n$ is a decreasing sequence;\label{e04}

\item $\displaystyle
\sum_{k=0}^n\lambda(\mathscr{C}_{k+1}^u)=\frac{n\log(1/u)}{\log n}\left(1+O_u\left(\frac{1}{\log n}\right)\right).\qquad(n\rar\infty)$\label{e05}
\end{enumerate}
\end{theorem}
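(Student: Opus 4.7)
The plan is to prove part (a) directly by a symmetry argument, and then to use (a) as the monotonicity input to an effective Tauberian theorem for part (b).

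For part (a), start from the observation that the two inverse branches of $F$ are $\psi_0(x) = x/(1+x)$ and $\psi_1(x) = 1/(1+x)$, both with $|\psi_i'(x)| = (1+x)^{-2}$. Since $\mathscr{C}_{n+1}^u = \psi_0(\mathscr{C}_n^u) \sqcup \psi_1(\mathscr{C}_n^u)$ (disjoint up to a single point), a change of variables gives $\lambda(\mathscr{C}_{n+1}^u) = 2\int_{\mathscr{C}_n^u} (1+x)^{-2}\,dx$, so
\[
\lambda(\mathscr{C}_n^u) - \lambda(\mathscr{C}_{n+1}^u) = \int_{\mathscr{C}_n^u}\left(1 - \frac{2}{(1+x)^2}\right)dx.
\]
The integrand $\phi(x) := 1 - 2/(1+x)^2$ changes sign at $x = \sqrt{2} - 1$, so (a) requires a genuine cancellation. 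The additional input I would use is the involutive symmetry $F(x) = F(1-x)$, which implies that $F^{-m}[u,1]$ is invariant under $x \mapsto 1 - x$ for every $m \geq 1$; for $n \geq 2$ one may therefore symmetrize $\phi$, reducing the problem to showing $\int_{\mathscr{C}_n^u}(1 - (1+x)^{-2} - (2-x)^{-2})\,dx \geq 0$. I would prove this by induction on $n$, exploiting the recursion $\mathscr{C}_{n+1}^u = F^{-1}(\mathscr{C}_n^u)$ together with the fact that the symmetrized integrand is positive in the central region about $1/2$ where $\mathscr{C}_n^u$ concentrates its mass. The case $n=1$ is immediate: $\int_u^1 \phi\,dx = u(1-u)/(1+u) \geq 0$.

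For part (b), pass to the transfer operator $\hat F$ of $F$ with respect to Lebesgue measure, so that $\lambda(\mathscr{C}_{k+1}^u) = \int_u^1 \hat F^k \mathbf{1}(y)\,dy$, and form the Laplace-type series
\[
A(s) := \sum_{k=0}^\infty \lambda(\mathscr{C}_{k+1}^u)\,e^{-sk} = \int_u^1 (I - e^{-s}\hat F)^{-1}\mathbf{1}(y)\,dy.
\]
The plan is first to establish the quantitative resolvent asymptotic
\[
A(s) = \frac{\log(1/u)}{s\log(1/s)}\left(1 + O_u\!\left(\frac{1}{\log(1/s)}\right)\right) \qquad (s \to 0^+),
\]
and then to invoke Freud's effective version of Karamata's Tauberian theorem---whose monotonicity hypothesis is precisely part (a)---to translate this into the partial-sum asymptotic of Theorem \ref{T2}(b).

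The hardest step will be the effective resolvent estimate. The point $1$ is a non-isolated spectral value of $\hat F$ on $L^1$ because of the infinite invariant density $1/x$, so one cannot simply quote a spectral-gap bound. I would split $[0,1]$ into a neighborhood of the neutral fixed point $0$ and the complement $[1/2,1]$, on which $F$ induces a uniformly expanding map conjugate to the Gauss map and therefore enjoys a genuine spectral gap (via Wirsing, Babenko, and Mayer--Roepstorff, as cited in the introduction). Organizing the resolvent by first-return times to $[1/2,1]$ and summing the resulting geometric series should produce the leading $\log(1/u)/(s\log(1/s))$ behavior; the remaining and main obstacle is to quantify the contribution of the neutral fixed point so that the error is uniformly $O_u(1/\log(1/s))$, matching the effective form needed to apply Freud's theorem.
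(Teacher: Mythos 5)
Your part~(a), after the (correct) symmetrization under $x\mapsto 1-x$, reduces to showing $\int_{\mathscr{C}_n^u} h\,d\lambda\ge 0$ for $h(x)=1-(1+x)^{-2}-(2-x)^{-2}$. The difficulty you have not addressed is that $\int_0^1 h\,d\lambda=0$ exactly, while the sets $F^{-n}[u,1]$ equidistribute in $[0,1]$ (this is the result of \cite{KSt1} quoted in the introduction), so $\mathscr{C}_n^u$ does \emph{not} concentrate its mass near $1/2$: the normalized integral $\lambda(\mathscr{C}_n^u)^{-1}\int_{\mathscr{C}_n^u}h\,d\lambda$ tends to $0$, and its sign is a second-order effect measuring the deviation of $\mathscr{C}_n^u$ from equidistribution. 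You propose no invariant that would make the induction close, so this step is a genuine gap. The paper's mechanism is different and elementary: writing $\lambda(\mathscr{C}_n^u)=\int_u^1\hat F^{n-1}\varphi_0\,d\mu$ with $\varphi_0(x)=x$ and $d\mu=dx/x$, it uses that $\hat F$ preserves increasing (concave) functions (\cite{KSl1}), and then the one-step inequality follows because $F^{-1}[u,1]=[u/(1+u),1/(1+u)]$ is a single interval of the same $\mu$-measure lying to the left of $[u,1]$.

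For part~(b) there are two gaps. First, the resolvent estimate for $A(s)$ is only sketched, with the contribution of the neutral fixed point---the entire difficulty---explicitly deferred; the paper obtains the corresponding estimate \eqref{e10} with no spectral input at all, using Aaronson's identity \eqref{e08} with $f=\varphi_0$ and $f=1$, the uniform bounds of Lemmas \ref{L1} and \ref{L2}, and the elementary asymptotics of Lemmas \ref{L3} and \ref{L4}. Second, and more fundamentally, even granting your estimate $A(s)=\frac{\log(1/u)}{s\log(1/s)}\bigl(1+O_u(1/\log(1/s))\bigr)$, the final step fails: a Freud-type Tauberian theorem applied to a transform of the form $\sigma L(\sigma)(1+o(1))$ with $L$ logarithmic yields a partial-sum error of only $O(1/\log\log n)$, not $O(1/\log n)$; the paper states this explicitly right after \eqref{e10}. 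The missing idea is to avoid Tauberizing $S(\sigma)$ directly: one multiplies the series for $S(\sigma)$ by $\sum_n\ell_N(n)e^{-n/\sigma}\approx\log\sigma$ so that the convolved transform equals $\sigma\log N\bigl(1+O(\log\sigma/\sigma)\bigr)$ with a power-saving error, applies Freud there to get a genuine $O(1/\log n)$, and then deconvolves via the elementary bound \eqref{e11}. Without that device your plan proves a weaker statement than Theorem \ref{T2}\eqref{e05}. (A minor point: the one-sided hypothesis in Freud's theorem is just nonnegativity of the summands, not part~(a); the monotonicity of part~(a) is what converts the Ces\`aro statement \eqref{e05} into Theorem \ref{T1}.)
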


To establish Theorem \ref{T1} from this, note that \eqref{e04} and \eqref{e05} imply that for all $u\in(0,1)$,
\begin{align*}
\lambda(\mathscr{C}_n^u)&\leq\frac{1}{n}\sum_{k=0}^n\lambda(\mathscr{C}_{k+1}^u)=\frac{\log(1/u)}{\log n}\left(1+O_u\left(\frac{1}{\log n}\right)\right)\mbox{, and}\\
\lambda(\mathscr{C}_n^u)&\geq\frac{1}{n}\sum_{k=n+1}^{2n}\lambda(\mathscr{C}_{k+1}^u)=\frac{1}{n}\left(\frac{2n\log(1/u)}{\log2n}-\frac{n\log(1/u)}{\log n}\right)\left(1+O_u\left(\frac{1}{\log n}\right)\right)\\
&=\left(\frac{\log(1/u)}{\log n}+O_u\left(\frac{1}{\log^2n}\right)\right)\left(1+O_u\left(\frac{1}{\log n}\right)\right)=\frac{\log(1/u)}{\log n}\left(1+O_u\left(\frac{1}{\log n}\right)\right),
\end{align*}
and hence
\[\lambda(\mathscr{C}_n^u)=\frac{\log(1/u)}{\log n}\left(1+O_u\left(\frac{1}{\log n}\right)\right).\qquad(n\rar\infty)\]
Then subtracting this expression for $u=\beta$ from that for $u=\alpha$ yields \eqref{e03}.

To prove Theorem \ref{T2}, we make use of the transfer operator $\hat{F}:L^1(\mu)\rar L^1(\mu)$ of $F$, which is the positive linear operator satisfying
\[\int_{B}\hat{F}f\,d\mu=\int_{F^{-1}(B)}f\,d\mu\mbox{, for all Borel subsets }B\subseteq[0,1]\mbox{ and }f\in L^1(\mu),\]
and is given by
\begin{equation}\label{e06}
\hat{F}f(x)=\frac{f(x/(1+x))+xf(1/(1+x))}{1+x}.
\end{equation}
Unlike how the Gauss problem was addressed, we do not appeal to any spectral properties of $\hat{F}$, as obtaining an effective version of \eqref{e02} through such means appears to be difficult, by the examinations of the spectrum by Isola \cite{I2} and Prellberg \cite{P}. We also forego applying strong, general results from infinite ergodic theory to $\hat{F}$. Instead, we establish estimates involving sums of the iterates of $\hat{F}$ specifically, and make careful applications of the equality \eqref{e08} following from \cite[Lemma 3.8.4]{A1} and Karamata's Tauberian theorem \cite{K}, which are important results underlying much of the machinery used in \cite{KSt1} and \cite{KSt2}, so as to obtain error terms. In particular, we make an application of Freud's effective version of Karamata's theorem \cite{F} in establishing an asymptotic estimate of a certain weighted sum of the measures $\lambda(\mathscr{C}_{k+1}^u)$ from an estimate of its Laplace transform derived from \eqref{e08}. We can then remove the weights to prove \eqref{e05} by a standard analytic number theory argument. See \cite{MT} for other asymptotic results derived from operator renewal theory involving the iterates of transfer operators of infinite measure-preserving systems.

\begin{figure}[t]
\begin{center}
\unitlength 0.43mm
\begin{picture}(0,105)(0,-5)

\dottedline{2}(-120,80)(-120,100)(0,80)(120,100)(120,80)
\dottedline{2}(-120,80)(-120,60)
\dottedline{2}(-120,60)(-120,80)(-60,60)(0,80)(60,60)(120,80)(120,60)
\dottedline{2}(0,80)(0,60)

\dottedline{2}(-120,40)(-120,60)(-90,40)(-60,60)(-30,40)(0,60)(30,40)(60,60)(90,40)(120,60)(120,40)
\dottedline{2}(-60,60)(-60,40) \dottedline{2}(0,60)(0,40)
\dottedline{2}(60,60)(60,40)
\dottedline{2}(-120,20)(-120,40)(-105,20)(-90,40)(-75,20)(-60,40)(-45,20)(-30,40)(-15,20)(0,40)(15,20)
(30,40)(45,20)(60,40)(75,20)(90,40)(105,20)(120,40)(120,20)
\dottedline{2}(-90,40)(-90,20) \dottedline{2}(-60,40)(-60,20)
\dottedline{2}(-30,40)(-30,20)
\dottedline{2}(0,40)(0,20) \dottedline{2}(30,40)(30,20)
\dottedline{2}(60,40)(60,20)
\dottedline{2}(90,40)(90,20)

\dottedline{2}(-120,5)(-120,20)(-112.5,5)(-105,20)(-97.5,5)(-90,20)(-82.5,5)(-75,20)(-67.5,5)(-60,20)
(-52.5,5)(-45,20)(-37.5,5)(-30,20)(-22.5,5)(-15,20)(-7.5,5)(0,20)
\dottedline{2}(120,5)(120,20)(112.5,5)(105,20)(97.5,5)(90,20)(82.5,5)(75,20)(67.5,5)(60,20)
(52.5,5)(45,20)(37.5,5)(30,20)(22.5,5)(15,20)(7.5,5)(0,20)

\dottedline{2}(-105,20)(-105,5) \dottedline{2}(-90,20)(-90,5)
\dottedline{2}(-75,20)(-75,5)
\dottedline{2}(-60,20)(-60,5) \dottedline{2}(-45,20)(-45,5)
\dottedline{2}(-30,20)(-30,5)
\dottedline{2}(-15,20)(-15,5) \dottedline{2}(0,20)(0,5)
\dottedline{2}(105,20)(105,5)
\dottedline{2}(90,20)(90,5) \dottedline{2}(75,20)(75,5)
\dottedline{2}(60,20)(60,5)
\dottedline{2}(45,20)(45,5) \dottedline{2}(30,20)(30,5)
\dottedline{2}(15,20)(15,5)

\Thicklines
\path(0,80)(120,80)
\path(-60,60)(60,60)
\path(-90,40)(-30,40) \path(30,40)(90,40)
\path(-105,20)(-75,20) \path(-45,20)(-15,20) \path(105,20)(75,20)
\path(45,20)(15,20)
\path(-112.5,5)(-97.5,5) \path(-82.5,5)(-67.5,5) \path(-52.5,5)(-37.5,5)
\path(-22.5,5)(-7.5,5)
\path(112.5,5)(97.5,5) \path(82.5,5)(67.5,5) \path(52.5,5)(37.5,5)
\path(22.5,5)(7.5,5)

\put(-125,100){\makebox(0,0){{\small $\frac{0}{1}$}}}
\put(0,88){\makebox(0,0){{\small $\frac{1}{2}$}}}
\put(135,80){\makebox(0,0){{\small ${\mathscr C}_1$}}}
\put(125,100){\makebox(0,0){{\small $\frac{1}{1}$}}}
\put(-125,80){\makebox(0,0){{\small $\frac{0}{1}$}}}
\put(125,80){\makebox(0,0){{\small $\frac{1}{1}$}}}
\put(-125,60){\makebox(0,0){{\small $\frac{0}{1}$}}}
\put(-60,67){\makebox(0,0){{\small $\frac{1}{3}$}}}
\put(60,67){\makebox(0,0){{\small $\frac{2}{3}$}}}
\put(125,60){\makebox(0,0){{\small $\frac{1}{1}$}}}
\put(-125,60){\makebox(0,0){{\small $\frac{0}{1}$}}}
\put(135,60){\makebox(0,0){{\small ${\mathscr C}_2$}}}

\put(-125,40){\makebox(0,0){{\small $\frac{0}{1}$}}}
\put(-90,47){\makebox(0,0){{\small $\frac{1}{4}$}}}
\put(-30,47){\makebox(0,0){{\small $\frac{2}{5}$}}}
\put(30,47){\makebox(0,0){{\small $\frac{3}{5}$}}}
\put(90,47){\makebox(0,0){{\small $\frac{3}{4}$}}}
\put(125,40){\makebox(0,0){{\small $\frac{1}{1}$}}}
\put(135,40){\makebox(0,0){{\small ${\mathscr C}_3$}}}
\put(135,20){\makebox(0,0){{\small ${\mathscr C}_4$}}}
\put(135,5){\makebox(0,0){{\small ${\mathscr C}_5$}}}

\put(-120,100){\makebox(0,0){{\tiny $\bullet$}}}
\put(120,100){\makebox(0,0){{\tiny $\bullet$}}}
\put(-120,80){\makebox(0,0){{\tiny $\bullet$}}}
\put(0,80){\makebox(0,0){{\tiny $\bullet$}}}
\put(120,80){\makebox(0,0){{\tiny $\bullet$}}}
\put(-120,60){\makebox(0,0){{\tiny $\bullet$}}}
\put(-60,60){\makebox(0,0){{\tiny $\bullet$}}}
\put(0,60){\makebox(0,0){{\tiny $\bullet$}}}
\put(60,60){\makebox(0,0){{\tiny $\bullet$}}}
\put(120,60){\makebox(0,0){{\tiny $\bullet$}}}

\put(-120,40){\makebox(0,0){{\tiny $\bullet$}}}
\put(-90,40){\makebox(0,0){{\tiny $\bullet$}}}
\put(-60,40){\makebox(0,0){{\tiny $\bullet$}}}
\put(-30,40){\makebox(0,0){{\tiny $\bullet$}}}
\put(0,40){\makebox(0,0){{\tiny $\bullet$}}}
\put(30,40){\makebox(0,0){{\tiny $\bullet$}}}
\put(60,40){\makebox(0,0){{\tiny $\bullet$}}}
\put(90,40){\makebox(0,0){{\tiny $\bullet$}}}
\put(120,40){\makebox(0,0){{\tiny $\bullet$}}}

\put(-125,20){\makebox(0,0){{\small $\frac{0}{1}$}}}
\put(-105,27){\makebox(0,0){{\small $\frac{1}{5}$}}}
\put(-75,27){\makebox(0,0){{\small $\frac{2}{7}$}}}
\put(-45,27){\makebox(0,0){{\small $\frac{3}{8}$}}}
\put(-15,27){\makebox(0,0){{\small $\frac{3}{7}$}}}
\put(125,20){\makebox(0,0){{\small $\frac{1}{1}$}}}
\put(105,27){\makebox(0,0){{\small $\frac{4}{5}$}}}
\put(75,27){\makebox(0,0){{\small $\frac{5}{7}$}}}
\put(45,27){\makebox(0,0){{\small $\frac{5}{8}$}}}
\put(15,27){\makebox(0,0){{\small $\frac{4}{7}$}}}

\put(-120,20){\makebox(0,0){{\tiny $\bullet$}}}
\put(-105,20){\makebox(0,0){{\tiny $\bullet$}}}
\put(-90,20){\makebox(0,0){{\tiny $\bullet$}}}
\put(-75,20){\makebox(0,0){{\tiny $\bullet$}}}
\put(-60,20){\makebox(0,0){{\tiny $\bullet$}}}
\put(-45,20){\makebox(0,0){{\tiny $\bullet$}}}
\put(-30,20){\makebox(0,0){{\tiny $\bullet$}}}
\put(-15,20){\makebox(0,0){{\tiny $\bullet$}}}
\put(0,20){\makebox(0,0){{\tiny $\bullet$}}}
\put(120,20){\makebox(0,0){{\tiny $\bullet$}}}
\put(105,20){\makebox(0,0){{\tiny $\bullet$}}}
\put(90,20){\makebox(0,0){{\tiny $\bullet$}}}
\put(75,20){\makebox(0,0){{\tiny $\bullet$}}}
\put(60,20){\makebox(0,0){{\tiny $\bullet$}}}
\put(45,20){\makebox(0,0){{\tiny $\bullet$}}}
\put(30,20){\makebox(0,0){{\tiny $\bullet$}}}
\put(15,20){\makebox(0,0){{\tiny $\bullet$}}}

\put(-125,0){\makebox(0,0){{\small $\frac{0}{1}$}}}
\put(-112.5,-2){\makebox(0,0){{\small $\frac{1}{6}$}}}
\put(-97.5,-2){\makebox(0,0){{\small $\frac{2}{9}$}}}
\put(-82.5,-2){\makebox(0,0){{\small $\frac{3}{11}$}}}
\put(-67.5,-2){\makebox(0,0){{\small $\frac{3}{10}$}}}
\put(-52.5,-2){\makebox(0,0){{\small $\frac{4}{11}$}}}
\put(-37.5,-2){\makebox(0,0){{\small $\frac{5}{13}$}}}
\put(-22.5,-2){\makebox(0,0){{\small $\frac{5}{12}$}}}
\put(-7.5,-2){\makebox(0,0){{\small $\frac{4}{9}$}}}
\put(125,0){\makebox(0,0){{\small $\frac{1}{1}$}}}
\put(112.5,-2){\makebox(0,0){{\small $\frac{5}{6}$}}}
\put(97.5,-2){\makebox(0,0){{\small $\frac{7}{9}$}}}
\put(82.5,-2){\makebox(0,0){{\small $\frac{8}{11}$}}}
\put(67.5,-2){\makebox(0,0){{\small $\frac{7}{10}$}}}
\put(52.5,-2){\makebox(0,0){{\small $\frac{7}{11}$}}}
\put(37.5,-2){\makebox(0,0){{\small $\frac{8}{13}$}}}
\put(22.5,-2){\makebox(0,0){{\small $\frac{7}{12}$}}}
\put(7.5,-2){\makebox(0,0){{\small $\frac{5}{9}$}}}

\put(-120,5){\makebox(0,0){{\tiny $\bullet$}}}
\put(-112.5,5){\makebox(0,0){{\tiny $\bullet$}}}
\put(-105,5){\makebox(0,0){{\tiny $\bullet$}}}
\put(-97.5,5){\makebox(0,0){{\tiny $\bullet$}}}
\put(-90,5){\makebox(0,0){{\tiny $\bullet$}}}
\put(-82.5,5){\makebox(0,0){{\tiny $\bullet$}}}
\put(-75,5){\makebox(0,0){{\tiny $\bullet$}}}
\put(-67.5,5){\makebox(0,0){{\tiny $\bullet$}}}
\put(-60,5){\makebox(0,0){{\tiny $\bullet$}}}
\put(-52.5,5){\makebox(0,0){{\tiny $\bullet$}}}
\put(-45,5){\makebox(0,0){{\tiny $\bullet$}}}
\put(-37.5,5){\makebox(0,0){{\tiny $\bullet$}}}
\put(-30,5){\makebox(0,0){{\tiny $\bullet$}}}
\put(-22.5,5){\makebox(0,0){{\tiny $\bullet$}}}
\put(-15,5){\makebox(0,0){{\tiny $\bullet$}}}
\put(-7.5,5){\makebox(0,0){{\tiny $\bullet$}}}
\put(0,5){\makebox(0,0){{\tiny $\bullet$}}}
\put(120,5){\makebox(0,0){{\tiny $\bullet$}}}
\put(112.5,5){\makebox(0,0){{\tiny $\bullet$}}}
\put(105,5){\makebox(0,0){{\tiny $\bullet$}}}
\put(97.5,5){\makebox(0,0){{\tiny $\bullet$}}}
\put(90,5){\makebox(0,0){{\tiny $\bullet$}}}
\put(82.5,5){\makebox(0,0){{\tiny $\bullet$}}}
\put(75,5){\makebox(0,0){{\tiny $\bullet$}}}
\put(67.5,5){\makebox(0,0){{\tiny $\bullet$}}}
\put(60,5){\makebox(0,0){{\tiny $\bullet$}}}
\put(52.5,5){\makebox(0,0){{\tiny $\bullet$}}}
\put(45,5){\makebox(0,0){{\tiny $\bullet$}}}
\put(37.5,5){\makebox(0,0){{\tiny $\bullet$}}}
\put(30,5){\makebox(0,0){{\tiny $\bullet$}}}
\put(22.5,5){\makebox(0,0){{\tiny $\bullet$}}}
\put(15,5){\makebox(0,0){{\tiny $\bullet$}}}
\put(7.5,5){\makebox(0,0){{\tiny $\bullet$}}}

\end{picture}
\end{center}
\caption{The sum-level sets as Stern-Brocot intervals}
\label{Fig:2}
\end{figure}
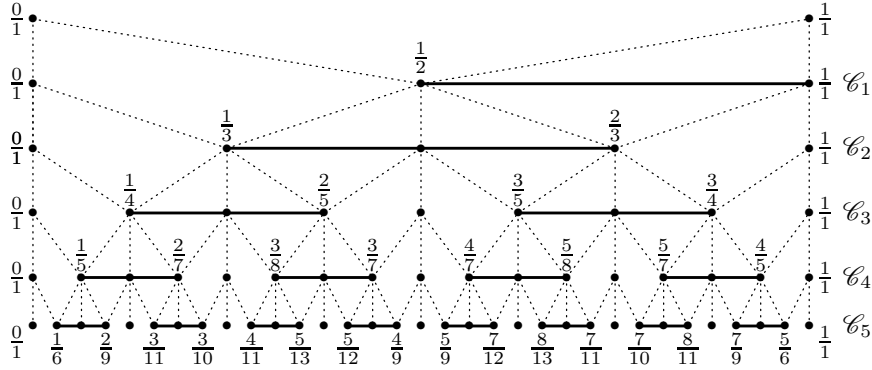

\begin{remark}\label{R1}
The sets $\mathscr{C}_n$ can be viewed in terms of the Stern-Brocot sequence $\mathcal{S}_n:=\{s_{n,k}/t_{n,k}:k=1,\ldots,2^n+1\}$ defined recursively by the following:
\begin{itemize}
\item $s_{0,1}:=0$ and $s_{0,2}:=t_{0,1}:=t_{0,2}:=1$;
\item $s_{n+1,2k-1}:=s_{n,k}$ and $t_{n+1,2k-1}:=t_{n,k}$ for $k=1,\ldots,2^n+1$;
\item $s_{n+1,2k}:=s_{n,k}+s_{n,k+1}$ and $t_{n+1,2k}:=t_{n,k}+t_{n,k+1}$ for $k=1,\ldots,2^n+1$.
\end{itemize}
Specifically, we have $\mathscr{C}_1=[1/2,1]=[s_{1,2}/t_{1,2},s_{1,3}/t_{1,3}]$, and for $n\geq2$,
 \[\mathscr{C}_n=\bigcup_{k=1}^{2^{n-2}}\left[\frac{s_{n,4k-2}}{t_{n,4k-2}},\frac{s_{n,4k}}{t_{n,4k}}\right].\]
This was the characterization of $\mathscr{C}_n$ considered by Fiala and Kleban \cite{FK} motivated by their study of spin chain models. The relationship between the Farey map and the Stern-Brocot sequence was exploited in \cite{KSt1} to prove the equidistribution of certain weighted subsets of the Stern-Brocot sequence.
\end{remark}

\section{Proof of Theorem \ref{T2}}

\subsection{Proof of \eqref{e04}}
The case in which $u\geq1/2$ follows from the proof of \cite[Theorem 1.1]{KSt2}, so assume $u\in(0,1/2)$. Let $\varphi_0:[0,1]\rar[0,1]$ be defined by $\varphi_0(x):=x$ (note $d\lambda=\varphi_0\,d\mu$) so that
\[\lambda(\mathscr{C}_n^u)=\int_u^1\hat{F}^{n-1}\varphi_0\,d\mu.\]
By \cite[Lemma 3.2]{KSl1}, $\hat{F}$ maps the set of functions $\{f\in C^2(0,1):f'>0,f''\leq0\}$ into itself, and thus it suffices to show that 
\[\int_u^1\hat{F}f\,d\mu<\int_u^1f\,d\mu\]
whenever $f\in L^1(\mu)$ is increasing. This follows from
\begin{align*}
\int_u^1f\,d\mu-\int_u^1\hat{F}f\,d\mu&=\int_u^1f\,d\mu-\int_{F^{-1}[u,1]}f\,d\mu=\int_u^1f\,d\mu-\int_{u/(1+u)}^{1/(1+u)}f\,d\mu\\
&=\int_{1/(1+u)}^1f\,d\mu-\int_{u/(1+u)}^uf\,d\mu\geq\left(f\left(\frac{1}{1+u}\right)-f(u)\right)\log(1+u)>0.
\end{align*}

\subsection{Proof of \eqref{e05}} We first consider the case where $u=1/N$, with $N\in\N$ and $N\geq2$. Define the function $a:\R\rar\R$ by
\[a(\sigma):=\frac{1}{\log N}\sum_{k=0}^{\lfloor\sigma\rfloor}\lambda(\mathscr{C}_{k+1}^u),\]
which is the $\mu$-average of the function $\hat{F}_\sigma\varphi_0:=\sum_{k=0}^{\lfloor\sigma\rfloor}\hat{F}^k\varphi_0$ on $\mathscr{C}_1^u=[1/N,1]$ by
\[\frac{1}{\mu(\mathscr{C}_1^u)}\int_{\mathscr{C}_1^u}\sum_{k=0}^{\lfloor\sigma\rfloor}\hat{F}^{k}\varphi_0\,d\mu=\frac{1}{\log N}\sum_{k=0}^{\lfloor\sigma\rfloor}\int_{F^{-k}(\mathscr{C}_1^u)}\varphi_0\,d\mu=\frac{1}{\log N}\sum_{k=0}^{\lfloor\sigma\rfloor}\lambda(\mathscr{C}_{k+1}^u).\]
We have the following bound on the difference of $\hat{F}_\sigma\varphi_0$ and $a(\sigma)$.

\begin{lemma}\label{L1}
For all $\sigma\in\R$ and $x\in\mathscr{C}_1^u$,
\[|\hat{F}_\sigma\varphi_0(x)-a(\sigma)|\leq\frac{N(N-1)}{2}.\]
\end{lemma}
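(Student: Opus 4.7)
The plan is to first use the monotonicity of $\hat{F}_\sigma\varphi_0$ to reduce the inequality to an oscillation bound on $[1/N,1]$, and then to establish that oscillation bound by induction along the chain $1>1/2>\cdots>1/N$ using the explicit formula \eqref{e06}.

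First, since $\varphi_0(x)=x$ lies in the cone of $C^2$ strictly increasing concave functions on $(0,1)$ that is preserved by $\hat{F}$ (by Lemma~3.2 of \cite{KSl1}), each iterate $\hat{F}^k\varphi_0$ is strictly increasing on $(0,1]$; hence so is $\hat{F}_\sigma\varphi_0$. Because $a(\sigma)$ is the $\mu$-weighted average of $\hat{F}_\sigma\varphi_0$ on $[1/N,1]$, it lies in $[\hat{F}_\sigma\varphi_0(1/N),\hat{F}_\sigma\varphi_0(1)]$, so for every $x\in[1/N,1]$,
\[|\hat{F}_\sigma\varphi_0(x)-a(\sigma)|\le \hat{F}_\sigma\varphi_0(1)-\hat{F}_\sigma\varphi_0(1/N)=\sum_{j=1}^{N-1}D_j,\]
where $D_j:=\hat{F}_\sigma\varphi_0(1/j)-\hat{F}_\sigma\varphi_0(1/(j+1))\ge 0$. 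It thus suffices to prove $D_j\le j$ for every $j\ge 1$, as the total is then at most $\sum_{j=1}^{N-1}j=N(N-1)/2$.

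For $j=1$, the specialization $\hat{F}f(1)=f(1/2)$ of \eqref{e06} gives $\hat{F}^k\varphi_0(1)=\hat{F}^{k-1}\varphi_0(1/2)$ for $k\ge 1$, and the resulting telescoping in $k$ collapses $D_1$ to $1-\hat{F}^{\lfloor\sigma\rfloor}\varphi_0(1/2)\le 1$. For $j\ge 2$, substituting $\hat{F}f(1/j)=(jf(1/(j+1))+f(j/(j+1)))/(j+1)$ into the partial sum $\sum_{k=1}^{\lfloor\sigma\rfloor}\hat{F}^k\varphi_0(1/j)$ and simplifying yields the exact recursion
\[D_j=\frac{1}{j}+\frac{\hat{F}_\sigma\varphi_0(j/(j+1))-\hat{F}_\sigma\varphi_0(1/(j+1))}{j+1}-\hat{F}^{\lfloor\sigma\rfloor+1}\varphi_0(1/j).\]
Dropping the nonnegative subtracted term and bounding $\hat{F}_\sigma\varphi_0(j/(j+1))\le\hat{F}_\sigma\varphi_0(1)$ by monotonicity gives $D_j\le 1/j+(1/(j+1))\sum_{i=1}^{j}D_i$, which rearranges to
\[D_j\le\frac{j+1}{j^2}+\frac{1}{j}\sum_{i=1}^{j-1}D_i.\]
Induction on $j$ from the base case $D_1\le 1$ then yields $D_j\le (j+1)/j^2+(j-1)/2\le j$, the final inequality reducing to $j^2\ge 2$ and holding for all $j\ge 2$.

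The main obstacle is the derivation of the recursion for $D_j$ above: it requires carefully substituting \eqref{e06} into the telescoped $k$-sums defining both $\hat{F}_\sigma\varphi_0(1/j)$ and $\hat{F}_\sigma\varphi_0(1/(j+1))$, and recognizing that the residual combination $(j\hat{F}^{\lfloor\sigma\rfloor}\varphi_0(1/(j+1))+\hat{F}^{\lfloor\sigma\rfloor}\varphi_0(j/(j+1)))/(j+1)$ reassembles into $\hat{F}^{\lfloor\sigma\rfloor+1}\varphi_0(1/j)$ via the same formula.
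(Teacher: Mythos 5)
Your proof is correct, and every identity you use checks out: the formula $\hat{F}f(1/j)=\bigl(jf(1/(j+1))+f(j/(j+1))\bigr)/(j+1)$ follows from \eqref{e06} at $x=1/j$, the exact recursion for $D_j$ (including the reassembly of the boundary terms into $\hat{F}^{\lfloor\sigma\rfloor+1}\varphi_0(1/j)$) is right, and the induction $D_1\le 1$, $D_j\le\frac{j+1}{j^2}+\frac{j-1}{2}\le j$ for $j\ge 2$ closes correctly to give $\sum_{j=1}^{N-1}D_j\le N(N-1)/2$. The paper shares your two ingredients --- monotonicity of $\hat{F}_\sigma\varphi_0$ (via \cite[Lemma 3.2]{KSl1}) to reduce to the oscillation over $[1/N,1]$, and the evaluation of \eqref{e06} at the points $1/j$ --- but organizes the estimate differently: it rewrites $\hat{F}^k\varphi_0(1/j)$ in terms of $\hat{F}^{k+1}\varphi_0(1/(j-1))$ and iterates this upward from $1/N$ to $1$, accumulating the multiplicative factor $\frac{N}{N-1}\cdot\frac{N-1}{N-2}\cdots\frac{3}{2}=\frac{N}{2}$, and then telescopes the resulting sum $\sum_{k}\bigl(\hat{F}^k\varphi_0(1)-\hat{F}^{k+N-1}\varphi_0(1)\bigr)\le N-1$ using $\hat{F}^k\varphi_0\le 1$. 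Your additive decomposition into local oscillations $D_j$ with a self-referential recursion avoids tracking the shifted iteration indices $k+m$ and the cascading prefactors, at the cost of an extra induction; both routes land on exactly the same constant $N(N-1)/2$, so neither is sharper, and the choice is essentially one of bookkeeping.
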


\begin{proof}
Without loss of generality, we assume that $\sigma=n\in\N\cup\{0\}$.  By \cite[Lemma 3.2]{KSl1}, we know that $\hat{F}_n\varphi_0$ is increasing. So the difference between $\hat{F}_n\varphi_0(x)$ and $a(n)$ is at most $\hat{F}_n\varphi_0(1)-\hat{F}_n\varphi_0(1/N)$. Using the equality
\[\hat{F}^k\varphi_0\left(\frac{1}{j}\right)=\frac{j}{j-1}\hat{F}^{k+1}\varphi_0\left(\frac{1}{j-1}\right)-\frac{1}{j-1}\hat{F}^k\varphi_0\left(\frac{j-1}{j}\right)\qquad(j\in\N,j\geq2)\]
following from \eqref{e06}, and the fact that $\hat{F}^k\varphi_0$ is increasing for each $k\in\N\cup\{0\}$, we have
\begin{align*}
\hat{F}_n\varphi_0(1)-\hat{F}_n\varphi_0\left(\frac{1}{N}\right)&=\sum_{k=0}^n\left(\hat{F}^{k}\varphi_0(1)-\hat{F}^{k}\varphi_0\left(\frac{1}{N}\right)\right)\\
&=\sum_{k=0}^n\left(\hat{F}^{k}\varphi_0(1)-\frac{N}{N-1}\hat{F}^{k+1}\varphi_0\left(\frac{1}{N-1}\right)+\frac{1}{N-1}\hat{F}^k\varphi_0\left(\frac{N-1}{N}\right)\right)\\
&\leq\frac{N}{N-1}\sum_{k=0}^n\left(\hat{F}^k\varphi_0(1)-\hat{F}^{k+1}\varphi_0\left(\frac{1}{N-1}\right)\right).
\end{align*}
Using this inequality recursively, and also the equality $\hat{F}f(1)=f(1/2)$, yields
\begin{align*}
\hat{F}_n\varphi_0(1)-\hat{F}_n\varphi_0\left(\frac{1}{N}\right)&\leq\frac{N}{2}\sum_{k=0}^n\left(\hat{F}^k\varphi_0(1)-\hat{F}^{k+N-2}\varphi_0\left(\frac{1}{2}\right)\right)\\
&=\frac{N}{2}\sum_{k=0}^n(\hat{F}^k\varphi_0(1)-\hat{F}^{k+N-1}\varphi_0(1))\leq\frac{N(N-1)}{2}.
\end{align*}
\end{proof}

Next, we let $S:(0,\infty)\rar\R$ be the Laplace transform of $a$ given by
\[S(\sigma):=\int_{0-}^\infty e^{-t/\sigma}\,da(t)=\frac{1}{\log N}\sum_{n=0}^\infty e^{-n/\sigma}\lambda(\mathscr{C}_{n+1}^u)\]
and prove the following bound similar to Lemma \ref{L1}.

\begin{lemma}\label{L2}
For all $x\in\mathscr{C}_1^u$ and all $\sigma>0$,
\[\left|\sum_{n=0}^\infty e^{-n/\sigma}\hat{F}^n\varphi_0(x)-S(\sigma)\right|\leq\frac{N(N-1)}{2}.\]
\end{lemma}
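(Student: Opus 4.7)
The plan is to mirror the proof of Lemma \ref{L1}, replacing the partial sum $\hat{F}_\sigma\varphi_0$ with the exponentially weighted series
\[g_\sigma(x) := \sum_{n=0}^\infty e^{-n/\sigma}\hat{F}^n\varphi_0(x).\]
Since $\mu(\mathscr{C}_1^u) = \log N$ and $d\lambda = \varphi_0\,d\mu$, the computation preceding Lemma \ref{L1} goes through verbatim to show that $S(\sigma) = \mu(\mathscr{C}_1^u)^{-1}\int_{\mathscr{C}_1^u}g_\sigma\,d\mu$. Each $\hat{F}^n\varphi_0$ is increasing by \cite[Lemma 3.2]{KSl1}, and since the weights $e^{-n/\sigma}$ are nonnegative, $g_\sigma$ is itself increasing on $[0,1]$. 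Thus for $x\in[1/N,1]$ we have $|g_\sigma(x) - S(\sigma)| \leq g_\sigma(1) - g_\sigma(1/N)$, and it suffices to bound this difference by $N(N-1)/2$.

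The key step is to upgrade Lemma \ref{L1}'s estimate to the pointwise inequality
\[\hat{F}^k\varphi_0(1) - \hat{F}^k\varphi_0(1/N) \leq \frac{N}{2}\bigl(\hat{F}^k\varphi_0(1) - \hat{F}^{k+N-1}\varphi_0(1)\bigr)\qquad(k\geq 0),\]
which I would obtain by iterating the recursion displayed in Lemma \ref{L1} $N-2$ times. At each stage one uses the monotonicity of the iterates together with the fact that $\hat{F}^{k+1}\varphi_0(1) = \hat{F}^k\varphi_0(1/2) \leq \hat{F}^k\varphi_0(1)$ makes $\hat{F}^k\varphi_0(1)$ nonincreasing in $k$; the resulting factor $N/2$ is the telescoping product $\tfrac{N}{N-1}\cdot\tfrac{N-1}{N-2}\cdots\tfrac{3}{2}$, and the endpoint $\hat{F}^{k+N-2}\varphi_0(1/2)$ becomes $\hat{F}^{k+N-1}\varphi_0(1)$ via \eqref{e06}. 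Multiplying by $e^{-k/\sigma}$ and summing over $k\geq 0$ yields
\[g_\sigma(1) - g_\sigma(1/N) \leq \frac{N}{2}\sum_{k=0}^\infty e^{-k/\sigma}\bigl(\hat{F}^k\varphi_0(1) - \hat{F}^{k+N-1}\varphi_0(1)\bigr).\]

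To finish, I would reindex the second series by $j = k+N-1$, rewriting the right-hand side as
\[\frac{N}{2}\sum_{k=0}^{N-2}e^{-k/\sigma}\hat{F}^k\varphi_0(1) + \frac{N}{2}\sum_{k=N-1}^\infty\bigl(e^{-k/\sigma} - e^{-(k-N+1)/\sigma}\bigr)\hat{F}^k\varphi_0(1).\]
The second sum is nonpositive, since its coefficient satisfies $e^{-k/\sigma} \leq e^{-(k-N+1)/\sigma}$ and $\hat{F}^k\varphi_0(1)\geq 0$, and so may be discarded. Using $e^{-k/\sigma}\leq 1$ and $\hat{F}^k\varphi_0(1) \leq \hat{F}^0\varphi_0(1) = 1$, the first sum is at most $N-1$, so the whole expression is at most $N(N-1)/2$. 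The main adjustment compared with Lemma \ref{L1} is the replacement of the finite telescoping used there by this reindexing-and-truncation step, which exploits the decay of the weights $e^{-k/\sigma}$; the algebraic heart of the proof—the iterated recursion producing the factor $N/2$—is identical.
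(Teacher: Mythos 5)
Your proof is correct, but it takes a genuinely different route from the paper's. The paper deduces Lemma \ref{L2} as a quick corollary of Lemma \ref{L1} via the Abel--summation identity \eqref{e07}: applying \eqref{e07} twice gives $\sum_{n\geq0}e^{-n/\sigma}\hat{F}^n\varphi_0(x)=(1-e^{-1/\sigma})\sum_{n\geq0}e^{-n/\sigma}\hat{F}_n\varphi_0(x)$ and $S(\sigma)=(1-e^{-1/\sigma})\sum_{n\geq0}e^{-n/\sigma}a(n)$, so the difference equals $(1-e^{-1/\sigma})\sum_{n\geq0}e^{-n/\sigma}\delta_n(x)$ with $|\delta_n(x)|\leq N(N-1)/2$ supplied by Lemma \ref{L1}, and summing the geometric series gives the bound. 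You instead rerun the telescoping recursion from the proof of Lemma \ref{L1} directly on the exponentially weighted series. Your version is sound: the identification of $S(\sigma)$ as the normalized $\mu$-average of $g_\sigma$ over $[1/N,1]$ is right, the recursion from \eqref{e06} together with monotonicity does give the termwise inequality $\hat{F}^k\varphi_0(1)-\hat{F}^k\varphi_0(1/N)\leq\frac{N}{2}\bigl(\hat{F}^k\varphi_0(1)-\hat{F}^{k+N-1}\varphi_0(1)\bigr)$ (in fact the paper's proof of Lemma \ref{L1} already establishes this summand by summand, so no real ``upgrade'' is needed), and your reindexing-and-truncation step is valid because $\hat{F}^k\varphi_0(1)\in[0,1]$ and the discarded tail has nonpositive coefficients. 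The trade-off: the paper's argument is shorter and uses Lemma \ref{L1} as a black box, and its Abel-summation mechanism would work for any uniformly bounded deviations $\delta_n$; yours is self-contained, duplicates the combinatorial computation, but makes explicit that the telescoping bound survives under any nonnegative, nonincreasing weight sequence in place of $e^{-k/\sigma}$.
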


\begin{proof}
We first note the equality
\begin{equation}\label{e07}
\sum_{n=0}^\infty a_ne^{-n/\sigma}=(1-e^{-1/\sigma})\sum_{n=0}^\infty e^{-n/\sigma}\left(\sum_{k=0}^na_k\right),
\end{equation}
which holds for all sequences $(a_n)$ satisfying $\sum_{k=0}^na_k=O(n)$ as $n\rar\infty$ and all $\sigma>0$.

Let $x\in\mathscr{C}_1^u$, $\delta_n(x):=\hat{F}_n\varphi_0(x)-a(n)$, and $\sigma>0$. Using \eqref{e07} twice, we have
\begin{align*}
\sum_{n=0}^\infty e^{-n/\sigma}\hat{F}^n\varphi_0(x)&=(1-e^{-1/\sigma})\sum_{n=0}^\infty e^{-n/\sigma}\hat{F}_n\varphi_0(x)=(1-e^{-1/\sigma})\sum_{n=0}^\infty e^{-n/\sigma}(a(n)+\delta_n(x))\\
&=S(\sigma)+(1-e^{-1/\sigma})\sum_{n=0}^\infty e^{-n/\sigma}\delta_n(x).
\end{align*}
Since $|\delta_n(x)|\leq N(N-1)/2$ for all $n\geq0$, we have
\[\left|(1-e^{-1/\sigma})\sum_{n=0}^\infty e^{-n/\sigma}\delta_n(x)\right|\leq(1-e^{-1/\sigma})\sum_{n=0}^\infty e^{-n/\sigma}\frac{N(N-1)}{2}=\frac{N(N-1)}{2}.\]
\end{proof}

To continue the proof, we will make use of the following equality given by \cite[Lemma 3.8.4]{A1}.
\begin{equation}\label{e08}
\int_A\left(\sum_{n=0}^\infty e^{-n/\sigma}\hat{F}^nf\right)(1-e^{-\varphi_A/\sigma})\,d\mu=\sum_{n=0}^\infty e^{-n/\sigma}\int_{A_n}f\,d\mu
\end{equation}
Here, $f$ is any function in $L^1(\mu)$, $\sigma$ is any positive real number, $A\subseteq[0,1]$ is any subset such that $\mu(A)<\infty$, $A_0:=A$, and $A_n:=F^{-n}A\bsl\bigcup_{k=0}^{n-1}F^{-k}A$ for $n\geq1$. Also, $\varphi_A:A\rar\N$ is the return time function on $A$ defined by $\varphi_A(x):=\min\{n\in\N:F^n(x)\in A\}$.

Letting $A=\mathscr{C}_1^u$ and $f=\varphi_0$ in \eqref{e08}, and noting that $A_n=[\frac{1}{n+N},\frac{1}{n+N-1})$ for $n\geq1$, we have
\begin{align*}
\int_{1/N}^1\left(\sum_{n=0}^\infty e^{-n/\sigma}\hat{F}^n\varphi_0\right)(1-e^{-\varphi_A/\sigma})\,d\mu&=\frac{N-1}{N}+\sum_{n=1}^\infty e^{-n/\sigma}\int_{1/(n+N)}^{1/(n+N-1)}\varphi_0\,d\mu\\
&=\frac{N-1}{N}+\sum_{n=1}^\infty\frac{e^{-n/\sigma}}{(n+N)(n+N-1)}.
\end{align*}
On the other hand, using Lemma \ref{L2}, we see that the left side of the above is also equal to
\begin{align*}
(S(\sigma)+O_N(1))(1-e^{-1/\sigma})&\int_{1/N}^1\left(\sum_{n=0}^\infty e^{-n/\sigma}\hat{F}^n1\right)(1-e^{-\varphi_A/\sigma})\,d\mu\\
&=(S(\sigma)+O_N(1))(1-e^{-1/\sigma})\left(\mu(\mathscr{C}_1^u)+\sum_{n=1}^\infty e^{-n/\sigma}\int_{1/(n+N)}^{1/(n+N-1)}\,d\mu\right)\\
&=(S(\sigma)+O_N(1))(1-e^{-1/\sigma})\left(\log N+\sum_{n=1}^\infty e^{-n/\sigma}\log\left(\frac{n+N}{n+N-1}\right)\right)
\end{align*}
as $\sigma\rar\infty$. (Note that \eqref{e08} holds for the constant function $f=1$ in spite of the fact that $1\notin L^1(\mu)$ since $\sum_{n=0}^\infty e^{-n/\sigma}\hat{F}^n1$ has finite integral over $\mathscr{C}_1^u$.) For our next step, we determine the asymptotic behavior of
\[\frac{N-1}{N}+\sum_{n=1}^\infty\frac{e^{-n/\sigma}}{(n+N)(n+N-1)}\quad\mbox{and}\quad\log N+\sum_{n=1}^\infty e^{-n/\sigma}\log\left(\frac{n+N}{n+N-1}\right).\]

\begin{lemma}\label{L3}
\[\frac{N-1}{N}+\sum_{n=1}^\infty\frac{e^{-n/\sigma}}{(n+N)(n+N-1)} = 1 + O\left(\frac{\log\sigma}{\sigma}\right).\qquad(\sigma\rar\infty)\]
\end{lemma}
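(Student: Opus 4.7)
The plan is to isolate the ``ideal'' value of the sum and then estimate the discrepancy. By partial fractions,
\[
\frac{1}{(n+N)(n+N-1)}=\frac{1}{n+N-1}-\frac{1}{n+N},
\]
so telescoping gives $\sum_{n=1}^\infty 1/((n+N)(n+N-1))=1/N$, and therefore replacing $e^{-n/\sigma}$ by $1$ in the sum would yield $(N-1)/N+1/N=1$. So it suffices to show that the error
\[
E(\sigma):=\sum_{n=1}^\infty\frac{1-e^{-n/\sigma}}{(n+N)(n+N-1)}
\]
satisfies $E(\sigma)=O(\log\sigma/\sigma)$ as $\sigma\to\infty$.

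The natural approach is to split the sum at $n=\lfloor\sigma\rfloor$, treating the two ranges with different bounds on $1-e^{-n/\sigma}$. For $1\leq n\leq\lfloor\sigma\rfloor$, I would use the elementary estimate $1-e^{-n/\sigma}\leq n/\sigma$ together with $(n+N)(n+N-1)\geq n^2$ (valid for $N\geq 1$) to bound the contribution by
\[
\frac{1}{\sigma}\sum_{n=1}^{\lfloor\sigma\rfloor}\frac{1}{n}\ll\frac{\log\sigma}{\sigma}.
\]
For $n>\lfloor\sigma\rfloor$, I would use the trivial bound $1-e^{-n/\sigma}\leq 1$ and the tail estimate
\[
\sum_{n>\lfloor\sigma\rfloor}\frac{1}{(n+N)(n+N-1)}\leq\sum_{n>\lfloor\sigma\rfloor}\frac{1}{n^2}\ll\frac{1}{\sigma}.
\]
Summing the two contributions gives $E(\sigma)\ll\log\sigma/\sigma$, which yields the claimed asymptotic.

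There is no real obstacle here; the bounds are elementary and the splitting point $n=\sigma$ is the standard choice balancing the linear Taylor bound against the tail bound. The only minor care needed is making sure the implied constant can be taken independent of $N$ (or at worst depending harmlessly on $N$, which is fine since $N$ is fixed in the lemma), and this is automatic from the estimate $(n+N)(n+N-1)\geq n^2$ for $N\geq 2$.
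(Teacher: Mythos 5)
Your proof is correct; every step checks out (the telescoping identity $\sum_{n\ge1}1/((n+N)(n+N-1))=1/N$, the bound $1-e^{-n/\sigma}\le n/\sigma$ on $n\le\sigma$, the trivial bound on the tail, and $(n+N)(n+N-1)\ge n^2$ for $N\ge2$ making the implied constant absolute). The route is genuinely different in execution from the paper's, though the underlying estimate is the same. The paper recognizes the left-hand side as the Laplace--Stieltjes transform $\int_{0-}^\infty e^{-t/\sigma}\,dS_1(t)$ of the step function $S_1(t)=1-\frac{1}{\lfloor t\rfloor+N}$, integrates by parts to get $1-\int_0^\infty\frac{e^{-x}\,dx}{\lfloor\sigma x\rfloor+N}$, and then bounds that integral by splitting at $x=1$ after the pointwise inequality $\lfloor t\rfloor+N\ge\frac12(t+2)$; you instead telescope the series directly and split the resulting sum at $n=\lfloor\sigma\rfloor$. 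Your version is arguably more elementary and self-contained. What the paper's formulation buys is reuse: the integral $\int_0^\infty e^{-x}(\lfloor\sigma x\rfloor+N)^{-1}\,dx$ and its $O(\sigma^{-1}\log\sigma)$ bound are cited again verbatim in the proof of Lemma~4 to control the error $\int_0^\infty e^{-x}\log\bigl(1+\{\sigma x\}/(\lfloor\sigma x\rfloor+N)\bigr)\,dx$, so the Stieltjes-integral setup does double duty there, whereas your series-splitting argument would have to be rerun (or adapted) for that step.
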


\begin{proof}
Let $S_1:\R\rar\R$ be defined by \[S_1(t):=\frac{N-1}{N}1_{[0,\infty)}(t)+\sum_{n=1}^{\lfloor t\rfloor}\frac{1}{(n+N)(n+N-1)}
=\begin{cases}1-\frac{1}{\lfloor t\rfloor+N}&\mbox{if }t\geq0\\
0&\mbox{if }t<0.\end{cases}\]
Then for $\sigma>0$,
\begin{align*}
\frac{N-1}{N}+\sum_{n=1}^\infty\frac{e^{-n/\sigma}}{(n+N)(n+N-1)}&=\int_{0-}^\infty e^{-t/\sigma}\,dS_1(t)=\frac{1}{\sigma}\int_0^\infty\left(1-\frac{1}{\lfloor t\rfloor+N}\right)e^{-t/\sigma}\,dt\\
&=1-\int_0^\infty\frac{e^{-x}\,dx}{\lfloor \sigma x\rfloor+N}.
\end{align*}
Since the inequality $\lfloor t\rfloor +N\geq\frac{1}{2}(t+2)$ holds for $t\geq0$, we have
\begin{align*}
\int_0^\infty\frac{e^{-x}\,dx}{\lfloor \sigma x\rfloor+N}\leq2\int_0^\infty\frac{e^{-x}\,dx}{\sigma x+2}\leq\int_0^1\frac{2\,dx}{\sigma x+2}+\int_1^\infty\frac{2e^{-x}}{\sigma}\,dx\ll\frac{\log\sigma}{\sigma}. \qquad(\sigma\rar\infty)
\end{align*}
\end{proof}

\begin{lemma}\label{L4} We have
\[\log N+\sum_{n=1}^\infty e^{-n/\sigma}\log\left(\frac{n+N}{n+N-1}\right)=\log(\sigma+N)+C+O_N\left(\frac{\log\sigma}{\sigma}\right),\qquad(\sigma\rar\infty)\]
where
\[C:=\int_0^1\frac{e^{-x}-1}{x}\,dx+\int_1^\infty\frac{e^{-x}}{x}\,dx.\]
\end{lemma}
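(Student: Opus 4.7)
The plan is to mimic the Laplace-transform-via-Abel-summation argument used in Lemma \ref{L3}. Defining $S_2(t) := \log(\lfloor t\rfloor + N)$ for $t \geq 0$ and $0$ otherwise, so that $S_2(0)-S_2(0-) = \log N$ and $S_2(n) - S_2(n-1) = \log((n+N)/(n+N-1))$ for $n\geq 1$, the left-hand side telescopes into the Laplace-Stieltjes integral
\[
\int_{0-}^\infty e^{-t/\sigma}\,dS_2(t) = \int_0^\infty e^{-x}\log(\lfloor \sigma x\rfloor + N)\,dx,
\]
so the task reduces to showing this integral equals $\log(\sigma + N) + C + O_N(\log\sigma/\sigma)$.

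I plan to proceed in two steps: (i) replace $\lfloor \sigma x\rfloor$ by $\sigma x$ inside the logarithm, and (ii) evaluate the resulting smooth integral. For step (i), on $[0,1/\sigma]$ both integrands are bounded by $\log(N+1)$, contributing $O_N(1/\sigma)$; on $[1/\sigma,\infty)$ the pointwise estimate
\[
\log(\lfloor\sigma x\rfloor + N) - \log(\sigma x + N) = \log\Big(1 - \frac{\{\sigma x\}}{\sigma x + N}\Big) = O_N(1/(\sigma x))
\]
yields an aggregate error $\ll (1/\sigma)\int_{1/\sigma}^\infty e^{-x}/x\,dx \ll \log\sigma/\sigma$, exactly as in Lemma \ref{L3}.

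For step (ii), integration by parts gives
\[
\int_0^\infty e^{-x}\log(\sigma x + N)\,dx = \log N + \int_0^\infty \frac{e^{-x}}{x + N/\sigma}\,dx,
\]
and the substitution $y = x + N/\sigma$ converts the remaining integral into $e^{N/\sigma}\int_{N/\sigma}^\infty e^{-y}/y\,dy$. Splitting at $y = 1$ and extracting the singular part via $\int_{N/\sigma}^1 dy/y = \log(\sigma/N)$, the regularized remainder $\int_{N/\sigma}^1 (e^{-y}-1)/y\,dy + \int_1^\infty e^{-y}/y\,dy$ converges to the constant $C$ with error $O(N/\sigma)$ (since $(e^{-y}-1)/y$ extends continuously to $[0,1]$). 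Multiplying by $e^{N/\sigma} = 1 + O_N(1/\sigma)$ absorbs the $\log(\sigma/N)$ factor into an additional $O_N(\log\sigma/\sigma)$ error, so the integral equals $\log\sigma + C + O_N(\log\sigma/\sigma)$. Since $\log(\sigma+N) = \log\sigma + O_N(1/\sigma)$, the lemma follows.

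The most delicate point is step (i) on the interval $[1/\sigma, 1]$: here the floor approximation is not rescued by exponential decay in $x$, and the $1/(\sigma x)$ bound integrates to precisely the $\log\sigma/\sigma$ factor that governs the final error, mirroring the bottleneck already present in Lemma \ref{L3}.
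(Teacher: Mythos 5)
Your proposal is correct and follows essentially the same route as the paper: Abel summation to the integral $\int_0^\infty e^{-x}\log(\lfloor\sigma x\rfloor+N)\,dx$, removal of the floor at a cost of $O(\log\sigma/\sigma)$, integration by parts, and extraction of the singular logarithm plus the constant $C$. The only (cosmetic) differences are that you split the floor-error estimate at $x=1/\sigma$ rather than invoking the bound from Lemma \ref{L3}, and you evaluate the remaining integral via the substitution $y=x+N/\sigma$ and the exponential integral rather than splitting $\int_0^\infty \sigma e^{-x}/(\sigma x+N)\,dx$ directly at $x=1$.
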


\begin{proof}
Let $S_2:\R\rar\R$ be defined by
\[S_2(t):=(\log N)1_{[0,\infty)}(t)+\sum_{n=1}^{\lfloor t\rfloor}\log\left(\frac{n+N}{n+N-1}\right)
=\begin{cases} \log(\lfloor t\rfloor+N)&\mbox{if }t\geq0\\
0&\mbox{if }t<0.\end{cases}\]
Then for $\sigma>0$,
\begin{align*}
\log N+\sum_{n=1}^\infty e^{-n/\sigma}\log\left(\frac{n+N}{n+N-1}\right)&=\int_{0-}^\infty e^{-t/\sigma}\,dS_2(t)=\frac{1}{\sigma}\int_0^\infty e^{-t/\sigma}\log(\lfloor t\rfloor+N)\,dt\\
&=\int_0^\infty e^{-x}\log(\lfloor\sigma x\rfloor+N)\,dx\\
&=\int_0^\infty e^{-x}\log(\sigma x+N)\,dx-\int_0^\infty e^{-x}\log\left(\frac{\sigma x+N}{\lfloor\sigma x\rfloor+N}\right)\,dx.
\end{align*}
Using the inequality $\log(1+x)\leq x$, we have
\[\int_0^\infty e^{-x}\log\left(\frac{\sigma x+N}{\lfloor\sigma x\rfloor+N}\right)\,dx=\int_0^\infty e^{-x}\log\left(1+\frac{\{\sigma x\}}{\lfloor\sigma x\rfloor+N}\right)\,dx\ll\int_0^\infty\frac{e^{-x}\,dx}{\lfloor\sigma x\rfloor+N},\]
which is $O(\sigma^{-1}\log\sigma)$ as $\sigma\rar\infty$ by the proof of Lemma \ref{L3}.

Next, integration by parts yields
\begin{equation}\label{e09}
\int_0^\infty e^{-x}\log(\sigma x+N)\,dx=\log N+\int_0^\infty\frac{\sigma e^{-x}\,dx}{\sigma x+N}.
\end{equation}
To continue, we consider the integral on the right over $[0,1]$ by writing
\[\int_0^1\frac{\sigma e^{-x}\,dx}{\sigma x+N}=\int_0^1\frac{\sigma\,dx}{\sigma x+N}+\int_0^1\frac{\sigma(e^{-x}-1)}{\sigma x+N}\,dx.\]
The first integral on the right equals $\log(\sigma+N)-\log N$, while the second equals
\begin{align*}
\int_0^1\frac{e^{-x}-1}{x}\,dx-N\int_0^1\frac{e^{-x}-1}{x(\sigma x+N)}\,dx&=\int_0^1\frac{e^{-x}-1}{x}\,dx+O\left(\int_0^1\frac{N\,dx}{\sigma x+N}\right)\\
&=\int_0^1\frac{e^{-x}-1}{x}\,dx+O_N\left(\frac{\log\sigma}{\sigma}\right).\qquad(\sigma\rar\infty)
\end{align*}
Now considering the integral in \eqref{e09} over $[1,\infty)$, we write
\begin{align*}
\int_1^\infty\frac{\sigma e^{-x}\,dx}{\sigma x+N}=\int_1^\infty\frac{e^{-x}}{x}\,dx-N\int_1^\infty\frac{e^{-x}\,dx}{x(\sigma x+N)}=\int_1^\infty\frac{e^{-x}}{x}\,dx+O_N\left(\frac{1}{\sigma}\right).
\end{align*}
Putting these results together proves the lemma.
\end{proof}

Lemmas \ref{L3} and \ref{L4} and the equalities preceding them gives
\[(S(\sigma)+O_N(1))(1-e^{-1/\sigma})\left(\log(\sigma+N)+C+O_N\left(\frac{\log\sigma}{\sigma}\right)\right)=1+O\left(\frac{\log\sigma}{\sigma}\right),\qquad(\sigma\rar\infty)\]
and as a result,
\begin{equation}\label{e10}
S(\sigma)=\frac{\sigma}{\log\sigma+C}+O_N(1).\qquad(\sigma\rar\infty)
\end{equation}
At this point, an application of Karamata's Tauberian theorem \cite{K} then yields
\[\sum_{k=0}^n\lambda(\mathscr{C}_{k+1}^u)\sim\frac{n}{\log_Nn}.\qquad(n\rar\infty)\]
Furthermore, one can apply an adaptation of Freud's effective version of Karamata's theorem \cite{F} (see also \cite[Section 7.4]{T}) accommodating logarithms to \eqref{e10} in order to prove
\[\sum_{k=0}^n\lambda(\mathscr{C}_{k+1}^u)=\frac{n}{\log_Nn}\left(1+O_N\left(\frac{1}{\log\log n}\right)\right).\qquad(n\rar\infty)\]
To obtain an error term of $O_N(1/\log n)$, we evaluate the equality
\begin{align*}
&(S(\sigma)+O_N(1))(1-e^{-1/\sigma})\left(\log N+\sum_{n=1}^\infty e^{-n/\sigma}\log\left(\frac{n+N}{n+N-1}\right)\right)=1+O\left(\frac{\log\sigma}{\sigma}\right)
\end{align*}
more precisely. Instead of directly establishing an asymptotic equality for $S(\sigma)$, we divide by $1-e^{-1/\sigma}$ and multiply the series expression for $S(\sigma)$ together with the other series on the left side. Together with Lemma \ref{L4}, this process yields
\[\frac{1}{\log N}\sum_{n=0}^\infty\left(\sum_{k=0}^n\lambda(\mathscr{C}_{k+1}^u)\ell_N(n-k)\right)e^{-n/\sigma}=\sigma\left(1+O_N\left(\frac{\log\sigma}{\sigma}\right)\right),\qquad(\sigma\rar\infty)\]
where $\ell_N(0):=\log N$ and $\ell_N(n):=\log\left(\frac{n+N}{n+N-1}\right)$ for $n>0$. Now a direct application of Freud's effective Tauberian theorem yields
\[\sum_{k=0}^n\sum_{j=0}^k\lambda(\mathscr{C}_{j+1}^u)\ell_N(k-j)=n\log N\left(1+O_N\left(\frac{1}{\log n}\right)\right).\qquad(n\rar\infty)\]
The left side of this expression is equal to
\begin{align*}
\sum_{k=0}^n\left(\lambda(\mathscr{C}_{k+1}^u)\log N+\sum_{j=0}^{k-1}\lambda(\mathscr{C}_{j+1}^u)\ell_N(k-j)\right)&=\log N\sum_{k=0}^n\lambda(\mathscr{C}_{k+1}^u)+\sum_{j=0}^{n-1}\lambda(\mathscr{C}_{j+1}^u)\sum_{k=j+1}^n\ell_N(k-j)\\
&=\log N\sum_{k=0}^n\lambda(\mathscr{C}_{k+1}^u)+\sum_{j=0}^{n-1}\lambda(\mathscr{C}_{j+1}^u)\log\left(\frac{n-j+N}{N}\right)\\
&=\sum_{k=0}^n\lambda(\mathscr{C}_{k+1}^u)\log(n-k+N),
\end{align*}
where the second equality follows from the definition of $\ell_N$ and telescoping. We can rewrite the last expression above as
\[\log(n+N)\sum_{k=0}^n\lambda(\mathscr{C}_{k+1}^u)+\sum_{k=1}^n\lambda(\mathscr{C}_{k+1}^u)\log\left(1-\frac{k}{n+N}\right).\]
So if we can show that
\begin{equation}\label{e11}
\sum_{k=1}^n\lambda(\mathscr{C}_{k+1}^u)\log\left(1-\frac{k}{n+N}\right)=O\left(\frac{n}{\log n}\right),\qquad(n\rar\infty)
\end{equation}
then we have
\[\sum_{k=0}^n\lambda(\mathscr{C}_{k+1}^u)=\frac{n}{\log_Nn}\left(1+O_N\left(\frac{1}{\log n}\right)\right).\]

Since individual terms in the left side of \eqref{e11} decay to $0$ as $n\rar\infty$, we can consider the sum starting from $k=3$. We have
\begin{align*}
\left|\sum_{k=3}^n\lambda(\mathscr{C}_{k+1}^u)\log\left(1-\frac{k}{n+N}\right)\right|&=\sum_{k=3}^n\lambda(\mathscr{C}_{k+1}^u)\sum_{j=1}^\infty\frac{1}{j}\left(\frac{k}{n+N}\right)^j=\sum_{j=1}^\infty\frac{1}{j(n+N)^j}\sum_{k=3}^nk^j\lambda(\mathscr{C}_{k+1}^u)\\
&\ll\sum_{j=1}^\infty\frac{1}{j(n+N)^j}\sum_{k=3}^n\frac{k^j}{\log k}\ll\sum_{j=1}^\infty\frac{1}{j(n+N)^j}\int_3^{n+1}\frac{x^j\,dx}{\log x}\\
&\ll\sum_{j=1}^\infty\frac{1}{j(n+N)^j}\left(\frac{(n+1)^{j+1}}{(j+1)\log(n+1)}\right)\\
&\ll\frac{n}{\log n}\sum_{j=1}^\infty\frac{1}{j(j+1)}\left(\frac{n+1}{n+N}\right)^j\ll\frac{n}{\log n}.\qquad(n\rar\infty)
\end{align*}
This proves Theorem \ref{T2} in the case that $u=1/N$.

For the general case $u\in(0,1)$, let $N=\lceil 1/u\rceil$ so that $[u,1]\subseteq[1/N,1]$. Then for $x\in[1/N,1]$, we have
\[\sum_{k=0}^n\hat{F}^k\varphi_0(x)=\frac{1}{\log N}\sum_{k=0}^n\lambda(\mathscr{C}_{k+1}^{1/N})+O_N(1)=\frac{n}{\log n}\left(1+O_N\left(\frac{1}{\log n}\right)\right).\qquad(n\rar\infty)\]
Integrating the first and last expressions over $[u,1]$ yields
\[\sum_{k=0}^n\lambda(\mathscr{C}_{k+1}^u)=\frac{n\log(1/u)}{\log n}\left(1+O_N\left(\frac{1}{\log n}\right)\right),\qquad(n\rar\infty)\]
completing the proof.

\begin{remark}\label{R2}
Theorem \ref{T1} can be extended to obtain an effective version of \cite[Proposition 1.3]{KSt1}. Specifically, if we let $f\in C^2[0,1]$ and $\|f\|_{C^2}:=\|f\|_\infty+\|f'\|_\infty+\|f''\|_\infty$, where $\|g\|_\infty:=\sup\{|g(x)|:x\in[0,1]\}$, we have
\begin{equation}\label{e12}
\int_{F^{-(n-1)}[\alpha,\beta]}f\,d\lambda=\frac{\log(\beta/\alpha)}{\log n}\left(\int_0^1f\,d\lambda+O_{\alpha,\beta}\left(\frac{\|f\|_{C^2}}{\log n}\right)\right).\qquad(n\rar\infty)
\end{equation}
The deduction of this involves essentially repeating the proof of Theorem \ref{T2} with $\varphi_0$ replaced by a general function $\varphi\in L^1(\mu)\cap C^2[0,1]$ satisfying $\varphi,\varphi'\geq0$ and $\varphi''\leq0$, and for a given $f\in C^2[0,1]$, writing the function $x\mapsto xf(x)$ as the difference of two appropriate functions with the properties of $\varphi$. One can then use \eqref{e12} to obtain an effective version of the Stern-Brocot equidistribution result \cite[Theorem 1.2]{KSt1}. Full details can be seen in the author's future Ph.D.~thesis.
\end{remark}

\ 

\noindent \textbf{Acknowledgements.}  I thank my advisor Florin Boca for suggesting this research topic and his guidance and encouragement. I thank Harold Diamond for his suggestions, which helped me improve the error term in Theorem \ref{T1}. I also thank Dalia Terhesiu and the referee for their helpful comments and suggestions. I also acknowledge support from National Science Foundation Grant DMS 08-38434 \lq\lq EMSW21-MCTP: Research Experience for Graduate Students.\rq\rq

\end{document}